\newtheorem*{lemma*}{Lemma}
\newtheorem{lemma}[subsection]{Lemma}
\newtheorem*{theorem*}{Theorem}
\newtheorem{theorem}[subsection]{Theorem}
\newtheorem*{proposition*}{Proposition}
\newtheorem{proposition}[subsection]{Proposition}
\newtheorem*{corollary*}{Corollary}
\newtheorem{corollary}[subsection]{Corollary}
\theoremstyle{definition}
\newtheorem*{definition*}{Definition}
\newtheorem{definition}[subsection]{Definition}
\newtheorem*{example*}{Example}
\newtheorem{example}[subsection]{Example}
\theoremstyle{remark}
\newtheorem*{remark*}{Remark}
\renewcommand{\phi}{\varphi}
\newcommand{\be}{\begin{enumerate}}
\newcommand{\ee}{\end{enumerate}}
\title{Cayley Hamilton algebras and a paper by  Skip Garibaldi}
\author{Claudio Procesi}
\begin{document}\address{Dipartimento di Matematica, G. Castelnuovo,
Universit\`a di Roma La Sapienza, piazzale A. Moro,  00185,
Roma, Italia}

\email{
     procesi@mat.uniroma1.it} \begin{abstract}
In this paper we discuss the minimal Cayley Hamilton norm for a finite dimensional algebra over a field $F$ based on a paper  by  Skip Garibaldi \cite{G}.
\end{abstract}
 \maketitle

    \section{ Introduction}
Since this paper is on Cayley--Hamilton algebras let me recall their formal
definition
\begin{definition}\label{CH}
Given a unital ring $R$ over a commutative ring $A$ a CH--
norm is a polynomial map $N : R  \to A$ (as A--modules) which satisfies the
following 3 properties:\begin{enumerate}\item $N$ is homogeneous of some degree $n$.
\item $N$ is multiplicative, that is $N(ab) = N(a)N(b)$ for all $a, b \in R$
\item Every element $a\in R$ satisfies its abstract characteristic polynomial
   $\chi_a(t) := N(t -a)$.\end{enumerate}
\end{definition}  
 An algebra $R$ equipped with such a norm is called an $n$--Cayley--Hamilton
algebra, CH--algebra for short.\smallskip

Actually in general we have to take the definition of Roby of polynomial map, \cite{Roby} and \cite{Roby1}, so that the previous conditions should be satisfied by $N_B:R_B:=R\otimes_AB\to B$ for every commutative $A$ algebra $B$.\smallskip

Notice that, if $N$ is a CH norm of degree $n$ and $f$ is a multiplicative map
of degree $m$ then $f\cdot   N$ is a CH norm of degree $n + m.$

Of course the main example is for $R := M_n(A)$ and $N$ the determinant.
On the other hand also the powers of $N$ make $R$ into a CH--algebra although
for different degrees.\medskip

This short note treats a special case of a question asked to me by Tian
Cheng Qi, a Ph.D. student at Fudan University. Properly translated his
question can be formulated as follow:\smallskip

{\bf Question}\quad Given an $A$--algebra $R$ which possesses some CH norm $N$ is
there a minimal CH norm $N_0$?

If such $N_0$ exists is it  so that any other CH norm is of the form
$N = f \cdot  N_0$ for $f$ a multiplicative map?\medskip

In this Note I prove that the answer is affirmative when $R$ is  finite dimensional over $F$ a  field, Theorem \ref{main}, and I explain what are the $F$ valued multiplicative
  maps for $R$. In general there may be  several different minimal norms, see example \ref{te}  3).
\smallskip

The proof is in fact a rather direct interpretation of the results
of the nice paper by Skip Garibaldi,\cite{G} {\em  The characteristic polynomial and
determinant are not ad hoc constructions,} whose goal was to define an intrinsic determinant for an algebra $R.$ It turns out that his definition gives
the canonical norm $N_0$. In fact the method is based on the clasical approach of Dedekind and Frobenius  to characters of finite groups.\medskip

The note is the result of a lucky and fortuitous combination of independent events. As I was thinking about the previous question I came across a
discussion in MathOver
flow Newsletter - Friday, August 30, 2024 

{\em Are automorphisms of matrix algebras necessarily determinant preservers?}
\smallskip

In one of the answers Qiaochu Yuan \cite{Q} pointed out the paper by Skip
Garibaldi which contains the main ingredients for the proof of the Theorem.
\section{The paper by Garibaldi, \cite{G}.}
\subsection{ Multiplicative polynomials}
Let $R$ be an algebra with 1, not necessarily associative,  finite dimensional
over an infinite  field $F$. Set $m := \dim_F R$.

Consider the algebra of polynomial functions (with values in $F$) on $R$
(as vector space). 

Choosing a basis $a_1,\cdots,a_m $ of $R$ this  algebra is isomorphic to the algebra $A :=
F[x_1,\cdots,x_m]$ of polynomials in $m$ variables. The identification is by computing $f$ on the {\em generic element}
\begin{equation}\label{ge}
x :=\sum^m_{i=1}x_ia_i \in  R\otimes_ F A .
\end{equation} 
\begin{definition}\label{mu}
A polynomial function $f : R \to F$ is multiplicative if $f(ab) =
f(a)f(b)$ for all $a,b \in R$.
\end{definition}   
\begin{lemma}\label{ho}
If $f $ is multiplicative then it is homogeneous of some degree $k$
and $f(1) = 1$.
\end{lemma}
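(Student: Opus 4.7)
The plan is to reduce the problem to classifying multiplicative polynomials on the scalar line $F \cdot 1 \subset R$, and then use infinitude of $F$ to lift this to a homogeneity statement on all of $R$.

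First I would evaluate at $1$: multiplicativity gives $f(1) = f(1 \cdot 1) = f(1)^2$, so $f(1) \in \{0,1\}$. If $f(1) = 0$, then $f(a) = f(a \cdot 1) = f(a) f(1) = 0$ for all $a$, so $f$ vanishes identically (and the lemma is vacuous, or we tacitly exclude it); otherwise $f(1) = 1$.

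Next, consider the polynomial $g : F \to F$ defined by $g(\lambda) := f(\lambda \cdot 1)$. Since $\lambda \mapsto \lambda \cdot 1$ is an algebra map $F \to R$, $g$ inherits multiplicativity: $g(\lambda \mu) = g(\lambda) g(\mu)$. Writing $g(\lambda) = \sum_i c_i \lambda^i$ and comparing $g(\lambda)g(\mu) = \sum_{i,j} c_i c_j \lambda^i \mu^j$ with $g(\lambda\mu) = \sum_i c_i \lambda^i \mu^i$ term by term (this is where one needs $F$ infinite, so that distinct monomials in two variables are linearly independent as functions), one finds $c_i c_j = 0$ for $i \neq j$ and $c_i^2 = c_i$. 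Combined with $g(1)=1$, this forces $g(\lambda) = \lambda^k$ for a unique integer $k \geq 0$.

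Finally, for any $a \in R$ and $\lambda \in F$, multiplicativity gives
\[
f(\lambda a) = f\bigl((\lambda \cdot 1) \, a\bigr) = g(\lambda)\, f(a) = \lambda^k f(a).
\]
Decomposing $f = \sum_j f_j$ into its homogeneous components (under the choice of coordinates $x_1,\ldots,x_m$ on $R$), the relation becomes $\sum_j \lambda^j f_j(a) = \lambda^k \sum_j f_j(a)$ for every $\lambda \in F$ and every $a \in R$. Since $F$ is infinite, the polynomial identity in $\lambda$ forces $f_j(a) = 0$ for all $j \neq k$ and all $a$, so $f = f_k$ is homogeneous of degree $k$. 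The only step that requires real care is the classification of multiplicative polynomials $F \to F$ in the previous paragraph, where the infinitude of $F$ is essential; the rest is mechanical.
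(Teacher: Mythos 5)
Your proof is correct and follows essentially the same route as the paper: restrict $f$ to the scalar line to see it acts as $\alpha\mapsto\alpha^k$, then use $f(\alpha a)=\alpha^k f(a)$ together with infinitude of $F$ to conclude homogeneity. You merely spell out two points the paper leaves implicit — the classification of multiplicative polynomial maps $F\to F$ (cited there as ``well known'') and the degenerate case $f(1)=0$ forcing $f\equiv 0$.
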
  \begin{proof}
It is well known that for $R = F$ the multiplicative polynomial maps
are just the powers   $\alpha\mapsto\alpha^ k$ so there is a $k$ such that $f$ restricted to $F$ is
   $\alpha\mapsto\alpha^ k$ hence $f(1)=1$. Now for all $a \in R$ and    for all  $\alpha\in F$ we have $f(\alpha   a) = f(\alpha   )f(a) = \alpha^   kf(a)$
so $f$ is homogeneous of degree $k$.
\end{proof}  
Given a multiplicative polynomial function $f$ decompose $f =
\prod^h_{j=1} f_j$
with $f_j$ irreducible.
Since $1 = f(1) =
\prod^h
_{j=1} f_j(1)$ we can normalyse the factors replacing $f_j$
with $f_j(1)^{-1}f_j$ so that $f_j(1) = 1$ for all $j.$ Given two generic elements
$$x:=\sum^m_{i=1}x_ia_i,\ y:=\sum^m_{i=1}y_ia_i$$
we have
$$\prod^h_{j=1} f_j(x)f_j(y) = f(x)f(y) = f(xy) =
\prod^h_{j=1} f_j(xy).$$
By the unique factorization it follows that $f_j(xy) = u_j(x)v_j(y)$ for two
polynomials a priori product of factors of the previous factorization, again
we can normalyze so that $u_j(1) = v_j(1) = 1$. Now evaluating $y \mapsto 1$ we have
$f_j(x \cdot  1) = u_j(x); f_j(1\cdot   y) = v_j(y)$ so we have proved.
\begin{proposition}\label{imu}
Consider a decomposition of polynomial maps $f = g \cdot  h$
with $g(1) = h(1) = 1$. Then $f$ is multiplicative if and only if both $g$ and $h$
are multiplicative.
\end{proposition}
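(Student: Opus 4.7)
The plan is to reduce both directions to facts that are essentially established in the paragraph just preceding the proposition. The ($\Leftarrow$) direction is immediate from the definition of multiplicativity: the pointwise product of two multiplicative polynomial maps is again multiplicative, and the normalization $g(1)h(1)=1$ is compatible with $f(1)=1$. All the content is in ($\Rightarrow$), which I would prove in two steps.

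First, I would finish the argument sketched just before the proposition and deduce that every normalized irreducible factor of a multiplicative $f$ is itself multiplicative. Starting from $f=\prod_{j=1}^h f_j$ with each $f_j$ irreducible and normalized so that $f_j(1)=1$, I would compare
\[
\prod_{j=1}^h f_j(x)\,f_j(y)\;=\;f(x)f(y)\;=\;f(xy)\;=\;\prod_{j=1}^h f_j(xy)
\]
inside $F[x_1,\ldots,x_m,y_1,\ldots,y_m]$. Since the $f_j(x)$ and the $f_j(y)$ are irreducible in disjoint sets of variables, unique factorization forces $f_j(xy)$ to be a scalar multiple of a product of some $f_k(x)$'s and $f_\ell(y)$'s. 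Specializing $y=1$ and then $x=1$, and using $f_j(1)=1$, pins this down to $f_j(xy)=c\,f_j(x)f_j(y)$, and setting $x=y=1$ forces $c=1$. Each $f_j$ is therefore multiplicative.

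Second, I would use this to handle an arbitrary decomposition $f=g\cdot h$ with $g(1)=h(1)=1$. Writing the irreducible factorizations of $g$ and $h$ (each factor normalized so as to take the value $1$ at $1$) yields an irreducible factorization of $f$; by unique factorization in $F[x_1,\ldots,x_m]$ these factors must agree, up to scalars, with the $f_j$ of the first step, and the conditions $g(1)=h(1)=1$ force all leftover scalars to equal $1$. Consequently $g$ and $h$ are each products of a subset of the $f_j$, and since each $f_j$ is multiplicative, so are $g$ and $h$ as products of multiplicative maps.

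The main thing to watch is the bookkeeping of normalization constants when splitting the irreducible factors between $g$ and $h$: the uniform condition that the $f_j$, $g$, and $h$ all take value $1$ at $1\in R$ is exactly what prevents a stray scalar from surviving at the end. Once this is in place, the argument is essentially formal, resting on unique factorization in $F[x_1,\ldots,x_m]$ (which is available since $F$ is a field) and on the identification of polynomial maps $R\to F$ with elements of that ring via the generic element \eqref{ge}.
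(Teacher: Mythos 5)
Your proof is correct and follows essentially the same route as the paper's: the paragraph preceding the proposition is precisely your first step (normalized irreducible factors of a multiplicative $f$ are themselves multiplicative, via unique factorization in $F[x_1,\ldots,x_m,y_1,\ldots,y_m]$ and specialization at $y=1$ and $x=1$). The remaining details you supply --- the trivial converse and the reduction of an arbitrary decomposition $f=g\cdot h$ with $g(1)=h(1)=1$ to the irreducible case, with the normalization at $1$ absorbing the scalars --- are exactly what the paper leaves implicit.
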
   
We now assume that $R$ is an associative algebra. \footnote{For non--associative but power associative algebras there are results by Jacobson see the rferences in \cite{G}.
}
\subsection{ The minimal polynomial}
Now the connection to CH--algebras.

Consider the generic element \eqref{ge}, $x :=\sum^m_{i=1}x_ia_i \in  R\otimes_ F A \subset R\otimes_ FK$ with
$K := F(x_1,\cdots,x_m)$ the function  field.

Since $R \otimes_F K$ is  finite dimensional over the  field $K$ we may consider
the minimal polynomial $P(t)  \in  F(x_1,\cdots,x_m)[t]$ of the generic element $x :=\sum^m_{i=1}x_ia_i $  over $K$.
\begin{proposition}\label{int}
\begin{enumerate}\item The polynomial $P(t)$ has coefficients in $A = F[x_1,\cdots,x_m]$
the polynomial ring.
\item If $k$ is the degree of $P$ the polynomial $f (x_1,\cdots,x_m):= (-1)^kP(0)$ is multiplicative.
\item $P(t) = f(t -x)$.
\item $f$ is a CH--norm.\end{enumerate}
\end{proposition}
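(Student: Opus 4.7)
The plan is to establish the four parts in the order (1), (3), (2), (4), with (2) as the technical heart: I will compare $f$ to the left-regular determinant and invoke Proposition \ref{imu} to transfer multiplicativity. The other parts are direct arguments.

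For (1): Since $R \otimes_F A$ is a free $A$-module of rank $m$, the characteristic polynomial $\chi_x(t) := \det(tI - L_x) \in A[t]$ of left-multiplication $L_x$ is monic of degree $m$ and annihilates $x$ by Cayley--Hamilton (evaluate $\chi_x(L_x) = 0$ at $1 \in R \otimes_F A$), so $x$ is integral over $A$. Since $A$ is a UFD and hence integrally closed in $K$, Gauss's lemma places the monic divisor $P(t) \in K[t]$ of $\chi_x(t)$ in $A[t]$.

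For (3): Choose the basis so that $a_1 = 1$, and compute the minimal polynomial of $t - x$ over $K(t)$ in two ways. First, $(-1)^k P(t - s) \in K(t)[s]$ is monic of degree $k$ in $s$ and vanishes at $s = t - x$ (since $P(x) = 0$). Because $P$ remains the minimal polynomial of $x$ over the transcendental extension $K(t)$ (a clearing-denominators argument on the coefficients in $s$) and $t - x$ generates the same $K(t)$-subalgebra as $x$, this is indeed the minimal polynomial of $t - x$. Second, applying the $F$-algebra substitution $\phi : A \to A[t]$, $x_1 \mapsto t - x_1$, $x_j \mapsto -x_j$ for $j \geq 2$, to $P(x) = 0$ produces $\tilde P(t - x) = 0$, where $\tilde P$ is obtained by substituting into the coefficients of $P$ and is again monic of degree $k$. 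Comparing the two forces $\tilde P(s) = (-1)^k P(t - s)$; setting $s = 0$ with $f = (-1)^k P(0)$ then yields $f(t - x) = (-1)^k \tilde P(0) = P(t)$.

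For (2) and (4): Let $D(r) := \det(L_r)$ be the left-regular determinant, multiplicative by associativity ($L_{rs} = L_r L_s$) with $D(1) = 1$. The minimal polynomial of $L_x$ as an $A$-endomorphism coincides with $P$ (via $P(L_x)(y) = P(x) \cdot y$), so $P \mid \chi_x$ in $K[t]$ and, by Gauss, in $A[t]$. Writing $\chi_x = P \cdot Q$ with $Q \in A[t]$ monic and setting $t = 0$ gives $(-1)^m D(x) = (-1)^k f(x) \cdot Q(0)$, i.e., $D = f \cdot g$ for $g := (-1)^{m-k} Q(0)$. For the normalizations at $x = 1$: $\chi_x|_{x=1} = (t-1)^m$, and unique factorization in $F[t]$ forces $P|_{x=1} = (t-1)^k$ and $Q|_{x=1} = (t-1)^{m-k}$, whence $f(1) = g(1) = 1$. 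Proposition \ref{imu} then transfers multiplicativity from $D$ to $f$. Claim (4) is now immediate: $f$ is homogeneous of degree $k$ by Lemma \ref{ho}, and the CH identity $\chi_r(r) = 0$ for $r \in R$ follows by specializing the generic identity $P(x) = 0$, using (3) to identify $\chi_r(t) = f(t - r)$ with $P(t)|_{x \to r}$.

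I expect the normalization step in (2) --- showing $f(1) = g(1) = 1$ --- to be the main obstacle, since it is what activates Proposition \ref{imu}; it turns on the clean factorization $\chi_x|_{x=1} = (t-1)^m$ together with unique factorization in $F[t]$.
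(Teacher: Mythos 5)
Your proof is correct, and while it runs on the same engine as the paper's --- the regular representation, Gauss's lemma, the multiplicativity of $\det(L_x)$, and Proposition \ref{imu} --- the execution differs in two places worth noting. For multiplicativity of $f$, the paper factors $\det(X)$ into its normalized irreducible (hence multiplicative) factors $f_i$, observes that $\det(t-X)=\prod_i f_i(t-x)$ is the irreducible factorization in $F[t,x_1,\dots,x_m]$ (using that $g\mapsto g(t-x)$ preserves irreducibility), and identifies $P(t)$ as a subproduct $\prod_{i\in S}f_i(t-x)$; this yields (2) and (3) in one stroke, since then $(-1)^kP(0)=\prod_{i\in S}f_i(x)$ and $f(t-x)=\prod_{i\in S}f_i(t-x)=P(t)$. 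You instead apply Proposition \ref{imu} directly to the two-factor decomposition $D=f\cdot g$ coming from $\chi_x=P\cdot Q$, normalizing via $\chi_x|_{x=1}=(t-1)^m$ and unique factorization in $F[t]$ --- which avoids any appeal to irreducibility of the factors or to its preservation under the substitution --- and you then prove (3) separately by computing the minimal polynomial of $t-x$ over $K(t)$ in two ways using the automorphism $x_1\mapsto t-x_1$, $x_j\mapsto -x_j$. The trade-off: the paper's argument is shorter once the irreducibility-preservation step is accepted and records the finer structural fact that $P$ is a product of normalized irreducible multiplicative polynomials evaluated at $t-x$ (in the spirit of the analysis in Section 3), while your route is more self-contained for the bare statement and makes fully explicit the normalization $f(1)=g(1)=1$ that activates Proposition \ref{imu}, which you correctly identify as the crux.
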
  
\begin{proof}
1) It is enough to prove that $x$ satisfies a monic polynomial $Q(t)$ with
coefficients in $A = F[x_1,\cdots,x_m]$ since then $P$ is a factor of $Q$ and we can
apply Gauss Lemma.

If one takes any faithful representation of $R$ as matrices, for instance
the regular representation, the generic element $x$ becomes a matrix $X$ with
entries linear polynomials in the $x_i$ so we can take for $Q(t)$ its characteristic
polynomial $\det(t -X)$.

2), 3) The polynomial $\det(X)$ is multiplicative. Decompose it into its
irreducible (and multiplicative by Proposition 2.4) factors $\det(X) =\prod_i f_i(x)$
so that $Q(t) =\prod_i f_i(t-x)$. Now if $g(x)$ is irreducible, as polynomial in the
$x_i$ it is also irreducible in $A[t] = F[t, x_1,\cdots,x_m]$ and so also $g(t -x)$ is
irreducible, therefore $\det(t-X) =
\prod_i f_i(t-x)$ is the decomposition into
irreducible factors.

Moreover if $k$ is the degree of $g$ we have $(-1)^kg(x) = g(0-x)=P(0)$. 

Since $P(t)$
is a factor of $Q(t)$ it is a product of some of its irreducible factors $f_i(t-x)$
and both claims follow.

4) This follows from the definition.
\end{proof}    
\subsection{ The main Theorem}
We can now prove our main result
\begin{theorem}\label{main}
The polynomial $f := (-1)^kP(0)$ is a unique minimal CH--
norm. All other CH--norms are of the form $g \cdot  f$ for $g$ multiplicative.
\end{theorem}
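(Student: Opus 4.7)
The strategy is to take an arbitrary CH--norm $N$ on $R$ of some degree $n$, show that it factors as $N = g\cdot f$ with $g$ multiplicative, and then derive the uniqueness of $f$ formally from this universal property.

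First, I would exploit the Cayley--Hamilton condition for $N$ applied to the generic element $x = \sum_i x_i a_i \in R\otimes_F A$. The polynomial $N(t-x) \in A[t]$ is monic of degree $n$ in $t$ (its leading coefficient is $N(1)=1$ by Lemma \ref{ho}), and by the CH property it is annihilated by $x$. Hence the minimal polynomial $P(t)$ of $x$ over $K = F(x_1,\ldots,x_m)$ divides $N(t-x)$ in $K[t]$; since both polynomials are monic with coefficients in the UFD $A$, Gauss's Lemma upgrades this to a factorization
\[
N(t-x) \;=\; P(t)\,Q(t),\qquad Q(t)\in A[t]\ \text{monic of degree } n-k.
\]

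Next, I would specialise $t\mapsto 0$. The left hand side equals $N(-x) = (-1)^n N$, where here I view $N$ as its associated polynomial in $F[x_1,\ldots,x_m]$ via the generic element; the right hand side equals $P(0)\cdot Q(0) = (-1)^k f \cdot Q(0)$. Setting $g := (-1)^{n-k} Q(0) \in F[x_1,\ldots,x_m]$ therefore gives the polynomial identity $N = g\cdot f$. To verify that $g$ is multiplicative, I observe that $N(1) = f(1) = 1$ by Lemma \ref{ho}, so $g(1) = 1$; Proposition \ref{imu} applied to the factorization $N = g\cdot f$ then propagates multiplicativity from $N$ to $g$.

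For the uniqueness of $f$, suppose $f'$ is another CH--norm with the same universal property. Then there exist multiplicative polynomials $g,g'$ with $f = g'\cdot f'$ and $f' = g\cdot f$. Multiplying these gives $g\cdot g' = 1$ in $F[x_1,\ldots,x_m]$, so $g$ is a unit and hence a nonzero constant; the normalization $g(1)=1$ then forces $g\equiv 1$, and therefore $f'=f$.

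The main obstacle is the passage from the mere divisibility $P(t)\mid N(t-x)$ in $K[t]$ to a genuine factorization inside $A[t]$; once Gauss's Lemma produces a monic $Q(t)\in A[t]$, the remainder of the argument is a substitution $t\mapsto 0$ and a single appeal to Proposition \ref{imu}.
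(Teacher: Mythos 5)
Your proposal is correct and follows essentially the same route as the paper: the CH property for the generic element makes $N(t-x)$ a monic multiple of $P(t)$, Gauss's Lemma keeps the cofactor in $A[t]$, and Proposition \ref{imu} transfers multiplicativity to the quotient $g$ after evaluating at $t=0$. You have merely written out in full the divisibility, specialization, and normalization steps that the paper's one-line proof leaves implicit, together with the (equally routine) uniqueness argument.
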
   \begin{proof}
Let $g$ be a CH--norm, so $Q(t) = g(t-x)$ is a monic polynomial
satisfied by $x$, hence it is a multiple of $P(t)$ and the claim follows by the
previous Proposition \ref{imu}.
\end{proof}  
\begin{definition}\label{der}
The degree of the minimal CH norm of $R$ will be called the {\em degree} of $R$.
\end{definition}
\begin{example}\label{du}

\end{example}   Let $R = F + F\epsilon,\ \epsilon^2 = 0$ be the dual numbers. The generic
element $x_1 + x_2\epsilon$  satisfies:
$$(x_1 + x_2\epsilon)^2 = x_1^2
+ 2x_1x_2  \epsilon$$
so its minimal polynomial is $(t -x_1)^2 = t^2-2x_1t + x_1^2$.
 The minimum
CH--norm is $x^2_1$.
 
For $n\times  n$ matrices the determinant is irreducible of degree $n$ and it is the minimal
CH--norm (by the next Proposition  \ref{deg}).

For the real quaternions it is the usual quadratic norm.\subsection{Complements}
Given $R$ finite dimensional over a field $F$  the degree $n$  of its minimal CH norm  has the following interpretation.
\begin{proposition}\label{deg}
Every element $a\in R$  satisfies a polynomial of degree $n$ over $F$  and there is a non empty Zariski open set $U$ of $R$  such that each element of $U$  does not satisfy any polynomial of degree $<n$.
\end{proposition}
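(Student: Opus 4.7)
The plan is to use the following standard reformulation: an element $a \in R$ satisfies a nonzero polynomial in $F[t]$ of degree $\le d$ if and only if the elements $1, a, a^2, \ldots, a^d$ are $F$--linearly dependent in $R$. The first claim will then follow directly from clause (3) of Definition \ref{CH}, and the second from applying this reformulation to the generic element $x$.

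For the first claim, the minimal CH--norm $f$ is homogeneous of degree $n$, so for every $a \in R$ the characteristic polynomial $\chi_a(t) := f(t-a)$ lies in $F[t]$ and is monic of degree $n$, and by axiom (3) of Definition \ref{CH} the element $a$ satisfies it.

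For the second claim, I would use the generic element $x = \sum_{i=1}^m x_i a_i \in R \otimes_F A$. By Proposition \ref{int} together with Theorem \ref{main}, the minimal polynomial $P(t)$ of $x$ over $K = F(x_1, \ldots, x_m)$ has degree exactly $n$, so the elements $1, x, x^2, \ldots, x^{n-1}$ are linearly independent over $K$. Writing these $n$ elements as coefficient vectors in the basis $a_1, \ldots, a_m$ yields an $n \times m$ matrix $M$ with entries in $A = F[x_1, \ldots, x_m]$ of rank $n$ over $K$; consequently some $n \times n$ minor $\Delta \in A$ is a nonzero polynomial. Set $U := \{\, a = \sum \alpha_i a_i \in R : \Delta(\alpha_1, \ldots, \alpha_m) \neq 0 \,\}$. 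Since $F$ is infinite and $\Delta \neq 0$, the set $U$ is a non-empty Zariski open subset of $R$. For any $a \in U$, the specialised matrix $M(\alpha_1, \ldots, \alpha_m)$ has as its rows the coordinate vectors of $1, a, a^2, \ldots, a^{n-1}$, and non-vanishing of the minor $\Delta(\alpha)$ forces these rows to be $F$--linearly independent; hence $a$ satisfies no nonzero polynomial of degree $< n$.

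The only point requiring attention is the identification $n = \deg P$, but this is already built into the construction of Proposition \ref{int}, where $f := (-1)^k P(0)$ and $P(t) = f(t-x)$, so the degree of the minimal CH--norm coincides with $\deg P$. No serious obstacle is expected: the linear-independence/minor argument is routine, and the infiniteness of $F$, already assumed throughout the paper, is precisely what guarantees that a nonzero polynomial $\Delta$ has non-empty non-vanishing locus.
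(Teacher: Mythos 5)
Your proposal is correct and follows essentially the same route as the paper: the first claim via specialization of $P(t)=f(t-x)$ to $\chi_a(t)=f(t-a)$, and the second via the rank of the $n\times m$ coefficient matrix of the powers $1,x,\dots,x^{n-1}$ of the generic element and the non-vanishing of an $n\times n$ minor. The only cosmetic difference is that you take $U$ to be the non-vanishing locus of a single minor, while the paper takes the complement of the common zero locus of all of them; both serve equally for the statement as phrased.
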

\begin{proof}
The first part is clear  if $a=\sum_i\alpha_ia_i$  then $a$ satisifies the polynomial $N(t-a)$  obtained from $P(t)$  by specializing $x_i\mapsto \alpha_i$.

An element $a$  satisfies a polynomial of degree $<n$  if and only if the $n$ elements  $a^i,\ i=0,\cdots,n-1$ are linearly dependent.

Now  take the $n$ elements  $x^i=\sum_{j=1}^m f_{i,j}  (x_1,\cdots,x_m)a_j,\ i=0,\cdots,n-1$\ where $x$ is the generic element. Since $x$ does not satisfy any polynomial of degree $n-1$  with coefficients in $K$ the $n\times m$  matrix with entries the polynomials $f_{i,j}  (x_1,\cdots,x_m)$  has rank $n$  so some of its $n\times n$ minors have determinant different from 0. The subvariety of $R$ where the elements have degree $<n$  is thus given by the vanishing of all these determinants. \end{proof}
As a corollary we have
\begin{corollary}\label{sub}
If $R\subset S$  are two finite dimensional algebras of the same degree then the minimal CH norm of $S$  restricted to $R$ is the minimal CH norm of $R.$
\end{corollary}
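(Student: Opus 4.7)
The plan is to show first that the restriction $N := N_S|_R$ is a CH norm on $R$, and then invoke Theorem \ref{main} together with a degree count to identify it with the minimal CH norm of $R$.

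For the first step, write $n$ for the common degree of $R$ and $S$ and let $N_S$ be the minimal CH norm of $S$, which has degree $n$ by Definition \ref{der}. I would check the three axioms of Definition \ref{CH} directly: homogeneity of degree $n$ and multiplicativity on $R$ are inherited immediately from the corresponding properties on $S$ (the restriction of a polynomial map is polynomial, of a homogeneous one is homogeneous, and of a multiplicative one is multiplicative). For the Cayley--Hamilton identity, note that for $a \in R \subset S$ the polynomial $N_S(t-a)\in F[t]$ is exactly the characteristic polynomial of $a$ viewed as an element of $S$, and hence is annihilated by $a$ in $S$; since $R$ embeds in $S$, the same identity holds in $R$.

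Thus $N$ is a CH norm on $R$ of degree $n$. By Theorem \ref{main}, $N = g \cdot N_0$ where $N_0$ is the minimal CH norm of $R$ and $g$ is a multiplicative polynomial function. Comparing degrees yields $\deg g = n - \deg N_0 = n - n = 0$, using the hypothesis that $R$ and $S$ have the same degree. A multiplicative polynomial of degree $0$ is a constant, and by Lemma \ref{ho} that constant equals $g(1) = 1$. Hence $g \equiv 1$ and $N = N_0$, as required.

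The only step requiring any thought is the verification that the Cayley--Hamilton identity is preserved under restriction, and even that is essentially tautological; the real content lies in the degree equality, which forces the multiplicative cofactor $g$ to be trivial. I do not expect any genuine obstacle.
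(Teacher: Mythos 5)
Your argument is correct and is essentially the one the paper intends (the paper states this as an immediate corollary and omits the proof): restrict the minimal CH norm of $S$ to $R$, observe that all three axioms of Definition \ref{CH} survive restriction to a subalgebra, and then use Theorem \ref{main} together with the equality of degrees to force the multiplicative cofactor $g$ to be the constant $1$ via Lemma \ref{ho}. No gaps.
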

In turn this  implies that Theorem \ref{main} holds also for a class of infinite dimensional algebras.
Let $R$  be an algebra over $F$  and algebraic over $F$,  its degree is the sup of the degrees of its elements. This of course can be infinity  but if it is some $n<\infty$  we have  that Theorem \ref{main} holds for $R$. In fact it is well  known cf. \cite{agpr}  that $R$ is {\em locally finite }  which easily implies  that $R=\bigcup_iR_i$  where each $R_i$ is finite dimensional of degree $n$. Then the previous corollary gives the claim.\medskip

If the degree is infinite several pathologies can occur, it need not be locally finite by a Theorem of Golod--Shafarevich  see \cite{agpr}. Even if it is locally finite and simple  it need not have any multiplicative map. In fact the sequence of algebras $M_n(F)^{\otimes k},\ k=1,\cdots$  can be embedded one in the other by mapping $a\mapsto a\otimes 1_n$ and then the statement is clear.\medskip

It remains the general question of algebras over a commutative ring $A$. If such an algebra $R$ has an $n$--CH norm then it is integral over $A$  of degree $n$ and still locally finite. So is this condition sufficient?

In this generality certainly not. A the following  examples point out to some further restrictions.

\begin{example}\label{te}

1)\quad Let $A$  be a domain containing an infinite field $F$ and $B$ its integral closure, assume $B\neq A$.  If there is a norm $N:B\to A$  of degree $n$  take $a\in B\setminus A$ then there exist $b,c\in A$ with $ba=c$ so that $c^n=a^nN(a)\implies N(a)=a^n\in A$.

Now take $n+1$  distinct elements $f_i\in F$ and we have
$$ (a+f_i)^n=a^n+\sum_{j=1}^n \binom nif_i^ja^{n-j}\in A.$$ This implies that setting $f$ the Vandermonde determinant of the $f_j$  we have $n\cdot f\cdot a\in A$  contradicting our hypotheses $a\notin A$.\smallskip

2)\quad Let $A$  be a domain and      $B $ an $A$ algebra,  let $T\subset B$ be the set of torsion  elements, then $T$ is an ideal  of  $B $ and $B/T$ is torsion free.  

 If there is a norm $N:B\to A$  of degree $n$   we claim that $N$ factors through $T$.  In fact  compute $N(a +b),\ b\in T$.  We have $sb=0$  for some $s\in A\setminus\{0\}$  so
$$s^nN(a +b)=N(sa)=s^nN(a)\implies N(a+b)=N(b). $$  In particular if $N$ is a CH norm then every element of $T$ is nilpotent of degree $\leq n$. The same holds for $A$ any commutative ring and the torsion of $R$ with respect to the multiplicative set of the non zero divisors of $A$.\smallskip

3)\quad Let us take the two $F$ algebras $M_3(F),\ F\oplus F$  and consider their direct sum $R:=M_3(F)\oplus( F\oplus F)$  as an $F\oplus F$ algebra.
Then $R$ has degree 3 over $F\oplus F$   and two different  CH norms of degree 3.
$$ N_1(A,(x,y)):=(\det(A), x^2y ),\quad N_2(A,(x,y)):=(\det(A), xy^2 ).$$ Of course by the same method we may construct different minimal CH norms for any degree $n>2$.  We will see that for $n=2$ this is not possible.
\end{example}

This suggests to restrict to analyse the case $A$ an integrally closed domain and $R$  torsion free and integral of some degree $n$.

Then $R\subset S:=R\otimes_AK$ with $K$ the field of fractions of $A$. Any $A$ valued norm on $R$ extends to a $K$ valued norm on $S$.            The first question is to see if the canonical CH norm $N$ of $S$  restricted to $R$ takes values in $A$.

 If $A$ is a UFD and $R$ is free over $A$  then the argument of Theorem \ref{main} holds verbatim. 
 

 In fact it is enough to assume $A$  integrally closed and Noetherian  by localization to the minimal primes which are DVR hence UFD and finitely generated torsion free modules over a DVR are free.  
 
 If $A$ contains an infinite field $F$ the hypothesis of being Noetherian can be dropped. In fact
this question reduces immediately to $R$ commutative,  take $a\in R$  and let $B$ be the subalgebra  of $R$  generated by $a$ which is a finite module over $A$.  Then one can take the algebra over $F$ finitely generated by the coefficients of a monic  polynomial of degree $n$ satisfied by $a$, its integral closure is still contained in $A$  since $A$ is integrally closed and it is Noetherian by Noether's normalization.  This reduces to the previous case.\smallskip
%
%

 It may have some interest to pursue further this  analysis.

\section{The multiplicative maps}
\subsection{The reduced norm}
What can we say about the possible multiplicative maps of a  finite dimensional $F$ algebra $R$?

The  first step is a reduction to semisimple algebras (in fact in positive
characteristic it is better to assume separable).

One can start from a more general question let $ R^\star$ be the multiplicative
group of $R$, a multiplicative polynomial restricted to $ R^\star$ is an algebraic
homomorphism to $ F^\star$ that is a {\em character}. So we reformulate the problem:
What can we say about the characters of $ R^\star$?

It is well known and easy to prove that for an infinite  field $F$ the characters of the group $GL(n; F)$ of invertible $n \times  n$ matrices are all integral
powers $\det(X)^k,\ k \in\mathbb Z$ and are polynomial if and only if $k\in\mathbb N.$

Let $J$ be the radical of $R$, we have that $1 + U$ is a unipotent normal
subgroup of $ R^\star$ and
$ R^\star/(1 + U) = (R/J)^\star.$ 
A character is trivial on a unipotent subgroup so we have that the characters
of $ R^\star$ coincide with those of $(R/J)^\star.$

\begin{lemma}\label{au}
The multiplicative maps of the  finite dimensional algebra $R$
are induced by the polynomial characters of  $(R/J)^\star.$
\end{lemma}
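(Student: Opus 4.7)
The plan is to upgrade the group-theoretic factorization already sketched in the excerpt to a factorization in the category of polynomial maps. Concretely, I will exhibit a bijection between multiplicative polynomial maps $f:R\to F$ and multiplicative polynomial maps $\bar f:R/J\to F$ via the canonical projection $\pi:R\to R/J$. One direction is transparent: given $\bar f$, the composition $\bar f\circ\pi$ is clearly polynomial and multiplicative, and the restriction of $\bar f$ to $(R/J)^\star$ is a polynomial character, since $(R/J)^\star$ is a non-empty open subvariety of the affine space $R/J$, hence Zariski dense in $R/J$ when $F$ is infinite.

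For the reverse direction I would fix a vector-space complement $S\subset R$ to $J$ and define $\bar f(\bar r):=f(s)$, where $s\in S$ is the unique lift of $\bar r\in R/J$ under the linear isomorphism $S\simto R/J$. This candidate is polynomial by construction, being a composition of a linear isomorphism with $f|_S$. The real content then lies in verifying the identity
\[f(r+j)=f(r)\qquad\text{for all }r\in R,\ j\in J,\]
which both certifies that $\bar f$ does not depend on the choice of splitting and delivers multiplicativity: for lifts $s,s'\in S$ of $\bar r,\bar r'$, the product $ss'$ is a lift of $\bar r\bar r'$ and so $\bar f(\bar r\bar r')=f(ss')=f(s)f(s')=\bar f(\bar r)\bar f(\bar r')$.

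To prove this identity, I would first use Lemma \ref{ho} to record that $f(1)=1$, hence $f$ is nowhere zero on $R^\star$. Whenever $\bar r\in(R/J)^\star$, the lift $r$ is automatically in $R^\star$, since the kernel of $R^\star\surto(R/J)^\star$ is contained in $1+J\subseteq R^\star$. For such $r$ the factorization $r+j=r(1+r^{-1}j)\in r\cdot(1+J)$ reduces everything to the triviality of the character $f|_{R^\star}$ on the unipotent subgroup $1+J$, which is the fact already invoked in the excerpt, yielding $f(r+j)=f(r)f(1+r^{-1}j)=f(r)$. Since the locus $\{r\in R:\bar r\in(R/J)^\star\}$ is Zariski open and dense in $R$, the polynomial equation $f(r+j)-f(r)=0$ propagates from this locus to all of $R$. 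I expect the main obstacle to be conceptual rather than computational, namely articulating why the factorization of a character of $R^\star$ through $(R/J)^\star$ (a group-theoretic statement) actually produces a polynomial function on all of $R/J$; the linear section $S\hookrightarrow R$ together with the single use of Zariski density above is precisely the bridge that supplies this.
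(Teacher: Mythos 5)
Your argument is correct and follows the same route as the paper: the paper's entire proof is the observation that $1+J$ is a unipotent normal subgroup of $R^\star$ with quotient $(R/J)^\star$ and that a polynomial character is trivial on a unipotent subgroup, which is exactly the fact you invoke to get $f(r+j)=f(r)$. Your additional bookkeeping --- the linear section $S\simeq R/J$ and the Zariski-density step that upgrades the group-theoretic factorization over $R^\star$ to an identity of polynomial maps on all of $R$ --- is left implicit in the paper, and supplying it is a genuine (and welcome) completion rather than a departure.
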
  
Assume $R$ semisimple, and in positive characteristic separable. Let $\bar F$ be the algebraic closure of $F$.
Then $R 
\otimes_F  \bar F = \oplus_jM_{h_j} ( \bar  F)$ is a direct sum of matrix algebras over   $\bar  F$
permuted by the Galois group $G$ (in fact one may take instead of   $\bar  F$  a  finite
Galois extension). As pointed out in \cite{G}, the generic element $x$ for $R$ is also a
generic element $x$ for  $R 
\otimes_F  \bar F$ and its minimal polynomial is the same $P(t).$

The canonical CH norm of $R 
\otimes_F  \bar F = \oplus_   jM_{h_j} ( \bar  F)$ is the product of all
determinants which, being permuted by the Galois group $G$, is of course a
polynomial with coefficients in $F$ as we already know. But in fact if we
group these determinants into the orbits under $G$ we have the irreducible
multiplicative polynomials of $R$.

In explicit terms $R =\oplus_  iR_i$ with $R_i$ simple. Clearly a multiplicative
polynomial map on $R$ is just the product of multiplicative polynomial maps
on $R_i$ for each $i$. So now assume $R$ simple, this means that $R = M_n(D)$ the
algebra of $n\times   n$ matrices over a division ring $D $   finite dimensional over $F$.
Let $Z$ be the center of $D$, I will discuss only the case $Z$ separable over $F$,
then this is classical material which I include for convenience of the reader.

Let $h := \dim_F Z,\ \dim_Z D = p^2$ and let  $\bar Z$ be an algebraic closure of $Z$
(and $F$) consider
$$M_n(D)\otimes_F  \bar Z = M_n(D
\otimes_F  \bar Z) = M_n(D\otimes _Z(Z\otimes _F \bar  Z) = M_n(D
\otimes _Z\bar  Z)^h = M_{pn}( \bar  Z)^h.$$
A multiplicative function $f:   M_{pn}( \bar  Z)^h.\to\bar   Z$ is of the form $$(z_1,\cdots      ,z_h) \mapsto\prod^h
_{i=1} \det(z_i)^{\ell_i}$$ and if it is Galois invariant all $\ell_i$ are equal to some $\ell$.

When $\ell= 1$ this map, restricted to $R = M_n(D)$, takes values in $F$ and it  is called the {\em  reduced
norm} and denoted by $N_{R/F }$. Any other multiplicative map is thus a power
of the reduced norm. In conclusion.

\begin{theorem}\label{fin}
The irreducible multiplicative polynomial maps for an alge-bra $R$ are the reduced norms of the simple factors of $R/J$.

The product of these irreducibles is the canonical CH norm of $R/J$.
\end{theorem}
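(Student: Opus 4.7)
The plan is to assemble the classification in three reductions, relying on the material already developed in the paper.

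First, I would invoke Lemma \ref{au} to replace $R$ by $R/J$, so that we may assume $R$ is semisimple (and, in positive characteristic, separable). Decomposing $R=\bigoplus_i R_i$ into simple factors, a multiplicative polynomial $f\colon R\to F$ with $f(1)=1$ factors uniquely as $f=\prod_i f_i$ where $f_i$ is obtained by setting the other coordinates to $1$; Proposition \ref{imu} guarantees that each $f_i$ is itself multiplicative. Because the variable sets of distinct $R_i$ are disjoint, an irreducible $f$ must come from a single simple factor, so the classification of irreducibles reduces to the case of a simple algebra $R_i=M_n(D)$.

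Second, for such a simple factor I would run the base-change calculation outlined in the excerpt. With $Z$ the center of $D$, $h=[Z:F]$, and $p^2=\dim_Z D$, the base change $R_i\otimes_F\bar Z$ becomes $M_{pn}(\bar Z)^h$, on which every multiplicative polynomial to $\bar Z$ has the form $(z_1,\dots,z_h)\mapsto \prod_j \det(z_j)^{\ell_j}$. To descend back to $F$ one needs invariance under $G=\mathrm{Gal}(\bar Z/F)$, which acts transitively on the $h$ matrix blocks (these being the factors of $Z\otimes_F\bar Z\simeq \bar Z^h$); transitivity forces all the $\ell_j$ equal to a single $\ell$. Hence every multiplicative polynomial on $R_i$ is a power of the reduced norm $N_{R_i/F}$, and $N_{R_i/F}$ itself is irreducible over $F$ because its $\bar Z$-factorization $\prod_j \det(z_j)$ is a single Galois orbit of irreducibles.

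Finally, to identify the canonical CH norm of $R/J$ with $\prod_i N_{R_i/F}$, I would use the observation (quoted from \cite{G} just before the theorem) that the minimal polynomial of the generic element of $R$ coincides with that of the generic element of $R\otimes_F\bar F$. Over $\bar F$ the canonical CH norm is visibly the product of the determinants of all matrix summands, and regrouping these summands by the simple factor $R_i$ to which they contribute assembles the product into $\prod_i N_{R_i/F}$.

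The main obstacle, and the technical heart of the argument, is the transitivity of the Galois action on the $h$ matrix blocks of $R_i\otimes_F\bar F$: it is precisely what forces all the exponents $\ell_j$ to agree (so that every multiplicative map on a simple factor is a power of one reduced norm) and, simultaneously, what guarantees irreducibility of the reduced norm over $F$ rather than its being a product of smaller Galois-invariant pieces. Once this transitivity is secured, the remaining steps are bookkeeping across the simple summands of $R/J$.
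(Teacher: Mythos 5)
Your proposal is correct and follows essentially the same route as the paper: reduction to $R/J$ via Lemma \ref{au}, splitting into simple factors with Proposition \ref{imu}, base change of $M_n(D)$ to $M_{pn}(\bar Z)^h$, and Galois transitivity on the blocks forcing all exponents equal, so that every multiplicative map on a simple factor is a power of the reduced norm. The only difference is that you spell out a few steps the paper leaves implicit (why irreducibles live in a single simple summand, and why transitivity gives irreducibility of the reduced norm over $F$), which is a faithful elaboration rather than a different argument.
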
 
\subsection{ Conclusions}
In general the minimal CH--norm of $R$ is a multiple of that of $R/J $ so it is a
product of these norms with positive exponents, but in general  the exponents may be bigger than 1. 

They are numerical invariants of the algebra
$R$ which would be interesting to investigate, one point is to understand the
relation between $R/J$ and the radical, for instance if $R$ is separable then
$R =  R/J\oplus J $ as vector space with $R/J =\oplus _ iR_i$ a subalgebra (Wedderburn).
The exponents may be related to the nilpotent degree of $R_iJ$.

 If $F$ is algebraically closed and $R$ is semisimple it is the direct sum $\oplus_   iEnd(V_i)$ and then the CH norm is the determinant of its minimal faithful representation
$\oplus_   iV_i$, this is certainly not true for non semisimple case for instance take $R$
to be the polynomial algebra in $n > 1$ variables truncated at degree 1 (all 
monomials of degree  $\geq  2$ set to 0). The degree of the canonical CH norm is
2 but this has no faithful 2--dimensional representation over $F$.

  In characteristic 0 one can develop the theory using the trace associated to the norm, and one knows \cite{agpr} that, if $n = tr(1)$, there is a canonical embedding
of $R$ into $n\times   n$ matrices over a commutative ring $B$ compatible with norms. Maybe one
can relate some properties of $B$ to these invariants.
\begin{example}\label{ba}
 If $R/J = F$, the minimal CH--norm is of the form $x^k; \ x \in F.$
 
 The exponent $k$  is the maximal degree of nilpotency of elements of $J$ (cf. 2.9).
\end{example}  
At this point one may try to treat the general case. If $R$ is a  finite free
or maybe even projective module over a commutative ring $A$ the same ideas
may work. If $R$ is Azumaya with center $A$ then it has a canonical reduced norm and this
is the minimal CH--norm with values in the center and by Theorem 2.28 of \cite{pv}  all other
norms are a power of this by faithfully 
flat descent.

 The general case may be
impossible to treat but maybe some more cases can be treated.\medskip
 
We finish by showing  that  an  algebra $R$   over $A$ with 2 invertible cannot have two different quadratic $A$ valued CH norms. 

In this case the  norm is induced by the trace by the formula  $2N(x)=tr(x)^2-tr(x^2)$.\smallskip

In fact suppose we had two such norms $N_1, N_2$ and for some element $x\in R$ we have $N_1(x)=a, N_2(x)=b,\ a\neq b$. Let us consider the two formal characteristic polynomials for $x$ induced by the two norms \begin{equation}\label{tc}
N_1(t-x)=t^2-ct+a, \ tr_1(x)=c;\ N_2(t-x)=t^2-dt+b,\ tr_2(x)=d.
\end{equation}
For $N_1$ the  characteristic polynomial of $ux$ is $t^2-uct+u^2a$  (by the formula with trace) so the  characteristic polynomial of $ux+v$ is 
\begin{equation}\label{c1}
(t-v)^2-vc(t-v)+v^2a=t^2-(2v+vc)t+v^2c+u^2a 
\end{equation} For $N_2$ the  characteristic polynomial of $ux$ is $t^2-udt+u^2b$    so the  characteristic polynomial of $ux+v$ is 
\begin{equation}\label{c2}
(t-v)^2-vd(t-v)+v^2b=t^2-(2v+vd)t+v^2d+u^2b.
\end{equation} By CH  and \eqref{tc}:$$ x^2-cx+a=x^2-dx+b=0\implies  (d-c)x+a-b=0,\  d-c,a-b\neq 0.$$So for $u=d-c,\ v=a-b$ we have both polynomials \eqref{c1}, \eqref{c2},  become $=t^2$ hence:
 $$\implies  (2+c)(a-b)= (2+d)(a-b)=0,\quad  (a-b)^2c+(d-c)^2a=(a-b)^2d+(d-c)^2b=0.$$\ In fact the last follows from the first 3.
 
The norm must factor modulo        $ (d-c)x+a-b$     
$$N_1(ux+v+ (d-c)x+a-b)= N_1(ux+v)\implies v^2c+u^2a=(v+a-b)^2c+(u+d-c)^2a$$

$$\implies  2v(a-b)c+2u(d-c)a=0,\ \forall u,v\implies 2 (a-b)c=2 (d-c)a=0.$$$$(2+c)(a-b)= 2 (a-b)c=9,\implies -4(a-b)=0$$
 which implies $a=b$ a contradiction.
  
  \bibliographystyle{amsalpha}

\end{document}